\def\section{\@startsection{section}{1}%
	\z@{.7\linespacing\@plus\linespacing}{.5\linespacing}%
	{\bfseries%\normalfont\scshape
		\centering
}}
\def\@secnumfont{\bfseries}
\newcommand\reallywidehat[1]{%
	\savestack{\tmpbox}{\stretchto{%
			\scaleto{%
				\scalerel*[\widthof{\ensuremath{#1}}]{\kern-.6pt\bigwedge\kern-.6pt}%
				{\rule[-\textheight/2]{1ex}{\textheight}}%WIDTH-LIMITED BIG WEDGE
			}{\textheight}%
		}{0.5ex}}%
	\stackon[1pt]{#1}{\tmpbox}%
}
\newtheorem{theorem}{Theorem}[section]
\newaliascnt{lemma}{theorem}
\newtheorem{lemma}[lemma]{Lemma}
\newaliascnt{proposition}{theorem}
\newtheorem{proposition}[proposition]{Proposition}
\newaliascnt{assumption}{theorem}
\newaliascnt{corollary}{theorem}
\newaliascnt{definition}{theorem}
\newtheorem{definition}[definition]{Definition}
\newaliascnt{example}{theorem}
\newaliascnt{remark}{theorem}
\newtheorem{remark}[remark]{Remark}
\newaliascnt{hypothesis}{theorem}
\newaliascnt{property}{theorem}
\newcommand{\Om} {\Omega}
\newcommand{\pa} {\partial}
\newcommand{\be} {\begin{equation}}
	\newcommand{\ee} {\end{equation}}
\newcommand{\bea} {\begin{eqnarray}}
	\newcommand{\eea} {\end{eqnarray}}
\newcommand{\Bea} {\begin{eqnarray*}}
	\newcommand{\Eea} {\end{eqnarray*}}
\newcommand{\de} {\delta}
\newcommand{\De} {\Delta}
\newcommand{\la} {\lambda}
\newcommand{\noi} {\noindent}
\newcommand{\Addresses}{{% additional braces for segregating \footnotesize
		\footnote{
			%	\footnotesize

			\noindent \textsuperscript{1}School of Mathematics,
			Indian Institute of Science Education and Research, Trivandrum (IISER-TVM),
			Maruthamala PO, Vithura, Thiruvananthapuram, Kerala, 695 551, INDIA.  \par\nopagebreak \noindent
			\textit{e-mail:} \texttt{dhanya.tr@iisertvm.ac.in}

			\noindent \textsuperscript{2}School of Mathematics, Indian Institute of Science Education and Research, Trivandrum (IISER-TVM),
			Maruthamala PO, Vithura, Thiruvananthapuram, Kerala, 695 551, INDIA  \par\nopagebreak \noindent
			\textit{e-mail:} \texttt{indulekhams17@iisertvm.ac.in}

			\noindent \textsuperscript{2}School of Mathematics, Indian Institute of Science Education and Research, Trivandrum (IISER-TVM),
			Maruthamala PO, Vithura, Thiruvananthapuram, Kerala, 695 551, INDIA  \par\nopagebreak \noindent
			\textit{e-mail:} \texttt{ritabrata20@iisertvm.ac.in}
			
			\noindent \textsuperscript{*}Corresponding author.

			\medskip\noindent
			{\bf Acknowledgments:}   R Dhanya was supported by
	DST/INSPIRE/04/2015/003221 and R Jana was supported by Prime Minister's Research Fellowship when this work was carried out. We thank anonymous referee for their valuable comments which helped in improving the presentation of the paper.
			
}}}
\begin{document}
	
	\title[SCP for a p-Laplace equation involving singularity and its applications]{ Strong comparison principle for a p-Laplace equation involving singularity and its applications	\Addresses	}

	\author[R.Dhanya, M.S. Indulekha and Ritabrata Jana ]
	{R.Dhanya\textsuperscript{1*}, M.S. Indulekha\textsuperscript{2} and Ritabrata Jana\textsuperscript{3}} 
\maketitle

	\begin{abstract}
We prove a strong comparison principle for radially decreasing solutions $u,v\in C_{0}^{1,\alpha}(\overline{B_R})$  of the singular equations $-\Delta_p u-\frac{\lambda}{u^\delta}=f(x)$ and  $-\Delta_p v-\frac{\lambda}{v^\delta}=g(x)$ in $B_R,$ where  $1<p\leq 2 , \; \delta\in (0,1)$ and $\lambda>0.$   We assume that $f $ and $ g$ are continuous radial functions with $0 \leq f \leq g$ and  $f\not \equiv g$ in $B_R.$ Also, a counterexample is provided where the strong comparison principle is violated when $p>2.$ In addition, we prove a three solution theorem for p-Laplace equation as an application of strong comparison principle. This is illustrated with an example.
	\end{abstract}

	\keywords{\textit{Key words:} Singular term, Strong Comparison Principle, Three solution theorem }
	
	Mathematics Subject Classification (2020): 35J92, 35J75, 35J66

		\section{Introduction}
	\noindent
	Strong comparison principle for p-Laplacian is an inevitable tool in the analysis of partial differential equations. It is useful in
	establishing existence and
	uniqueness results, a-priori estimates, symmetry results,  etc. We consider the following p-Laplace equations for $p\in (1,\infty)$
	\begin{equation}\label{qn}\left.\begin{array}{rll}
			-\De_p u -\frac{\la}{u^\de}&=& f(x) \mbox{ in } B_R \\[2mm]
			-\De_p v -\frac{\la}{v^\de}&=& g(x)\mbox{ in } B_R \\[2mm]
			u=v&=&0 \mbox{ on }\pa B_R
		\end{array}\right\}\end{equation}
	where $B_{R}\subset  \mathbb{R}^n$ is an open ball of radius $R$ centred at origin, $\delta\in (0,1)$ and $\lambda>0.$ The functions $f$ and $g$ belong to $C({B_R})$ and are radial such that $0\leq f\leq g$ and  $f\not\equiv g$ in $B_R.$ We assume that $u$ and $v$ belong to  $C^{1,\alpha}_0(\overline{B_R})$ for some $\alpha\in (0,1).$ Clearly, the solutions $u$ and $v$ of (\ref{qn}) are positive in $B_{R}.$  Given that $f\leq g,$ by weak comparison principle we observe that
	$u\leq v.$ The strong comparison principle (SCP) for $(\ref{qn})$ reads as 
	\begin{equation}\label{eqnscp}
		0<u<v \mbox{ in }B_{R} \,\, \mbox{ and }\,\,\frac{\pa v}{\pa \nu}<\frac{\pa u}{\pa \nu} <0 \mbox{ on }\pa B_{R} 
	\end{equation}
	where $\nu$ denotes the outward normal vector on $\pa B_R.$ The main goal of this article is to investigate to what extend the
	strong comparison principle \eqref{eqnscp} is valid for the p-Laplace equation with a singular nonlinearity as in \eqref{qn}. \\
	
	In the literature, standard methods of strong comparison principle were developed for equations
	\begin{equation}\label{qnQ}\left.\begin{array}{rll}
			-\De_p u -b(x,u)&=& f(x) \mbox{ in }\Om\\
			-\De_p v-b(x,v)&=& g(x)\mbox{ in } \Om\\
			u=v&=&0 \mbox{ on }\pa \Om
		\end{array}\right\}\end{equation}
	where $b(x,\cdot)$ is an increasing function for each $x.$ If $u\le v$ and $f(x) \le g(x),$ then we have  $f^* \le g^*,$ where $f^*:=b(x,u)+f(x)$ and $g^*:=b(x,v)+g(x).$ Now the  comparison principle of \cite{lucia2004strong} is applicable for $-\Delta_p u=f^*$ and $-\Delta_p v=g^*$ and yields $u(x)<v(x)$ for all $x\in \Omega.$ On the other hand, the above technique is no longer applicable for (\ref{qn}) as the function $b(x,\cdot)$ is decreasing for each $x.$ 
	
	Giacomoni et. al. in \cite{giacomoni2007sobolev} derived a  strong comparison principle for quasilinear elliptic equation with singular non-linearity. Here the authors proved that  $u<v$ in $\Om$ for the same set of equations (\ref{qn}), but with a stronger assumption $0\leq f<g$ in $\Omega.$ In contrast to this, we no longer assume $f<g$ and hence the result obtained is stronger. In  \cite{papageorgiou2015bifurcation}, the  SCP is shown  for  PDE of the type $ -\De_p u -\frac{\la}{u^\de}+\sigma u^{p-1} = f(x)$  with similar assumptions as in \cite{giacomoni2007sobolev}. It is noteworthy to mention that in both these articles authors have used the fact that $g-f$ attains a  positive minimum in any compact subset of $\Omega.$ Our main focus  here is to investigate the validity of  SCP relaxing this condition. In section 2, we state the main result as Theorem \ref{MainSCP}, where we prove that the SCP is valid in $B_R$ if $1< p\leq 2$ and $0\leq f\leq g.$ In addition to this, we provide a counterexample for the SCP when $p>2.$ 
	\begin{theorem}\label{MainSCP}
		Let $1<p\leq 2,\; \la>0 $ and $f,g$ be continuous radial functions in $B_R$ such that $0\leq f\leq g$ in $B_R$ and $f\not\equiv g$ in $B_R.$ Assume that  $u,v\in C^{1,\alpha}(\overline{B_R})$, are radially decreasing solutions of  $-\Delta_p u-\frac{\lambda}{u^\delta}=f(x)$ and  $-\Delta_p v-\frac{\lambda}{v^\delta}=g(x)$, $u=v=0$ on $\pa B_{R}$. Then $0<u<v$ in $B_R$ and $\frac{\pa v}{\pa \nu}<\frac{\pa u}{\pa \nu}<0$ on $\pa B_R.$
	\end{theorem}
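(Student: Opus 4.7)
The plan is to linearize the two equations around the pair $(u,v)$ and derive a single linear elliptic PDE for $w:=v-u$, with a nonnegative right-hand side and a nonnegative zeroth order term, to which the classical strong maximum principle and Hopf boundary point lemma can be applied. The restriction $1<p\le 2$ enters at exactly one step---the ellipticity of the linearization---and this is also what the counterexample for $p>2$ breaks.

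I would proceed in three main steps. First, the inequality $u\le v$ in $B_R$ follows from a weak comparison: testing the subtracted equation against $(u-v)^+\in W_0^{1,p}(B_R)$ and using that $t\mapsto -\lambda t^{-\delta}$ is increasing on $(0,\infty)$ (so the singular contribution has the favorable sign on $\{u>v\}$) together with the strict monotonicity of $\xi\mapsto|\xi|^{p-2}\xi$ forces $(u-v)^+\equiv 0$. Second, setting $u_t:=(1-t)u+tv$ and using the fundamental theorem of calculus identities
\[
|\nabla v|^{p-2}\nabla v - |\nabla u|^{p-2}\nabla u = A(x)\,\nabla w, \qquad \frac{1}{u^\delta}-\frac{1}{v^\delta}=\delta\,I(x)\,w,
\]
with
\[
A(x):=\int_0^1\!\bigl(|\nabla u_t|^{p-2}\mathrm{Id}+(p-2)|\nabla u_t|^{p-4}\nabla u_t\otimes\nabla u_t\bigr)\,dt,\quad I(x):=\int_0^1\!u_t^{-\delta-1}\,dt>0,
\]
the subtraction of the two PDEs for $u$ and $v$ yields the linear equation
\[
-\operatorname{div}\bigl(A(x)\nabla w\bigr)+\lambda\delta\,I(x)\,w \;=\; g-f \;\ge\; 0 \quad\text{in }B_R,\qquad w=0 \text{ on }\partial B_R.
\]

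Third, for $1<p\le 2$, the smallest eigenvalue of $A$ is bounded below by $(p-1)\int_0^1|\nabla u_t|^{p-2}\,dt$, and since $|\nabla u_t|\le M:=\max(\|u\|_{C^1},\|v\|_{C^1})$ with $p-2\le 0$, this is at least $(p-1)M^{p-2}>0$; thus $A$ is uniformly elliptic on $B_R$. The classical strong maximum principle applied to the linear equation for $w$ then implies either $w\equiv 0$ or $w>0$ throughout $B_R$, and the first alternative would force $g-f\equiv 0$, contradicting $f\not\equiv g$. Hence $u<v$ in $B_R$. For the boundary derivative, an integration of the radial ODE $\bigl(r^{n-1}(-u'(r))^{p-1}\bigr)'=r^{n-1}(\lambda u^{-\delta}+f)$ gives $u'(R),v'(R)<0$, so $A$ and $I$ are bounded above and below in a one-sided neighborhood of $\partial B_R$. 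Applying the classical Hopf boundary point lemma to the linear equation above then yields $\partial w/\partial\nu<0$, i.e., $\partial v/\partial\nu<\partial u/\partial\nu$, and combined with $\partial u/\partial\nu<0$ this produces the desired $\partial v/\partial\nu<\partial u/\partial\nu<0$.

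The main obstacle is precisely the ellipticity step. For $p>2$ the smallest eigenvalue of $A$ becomes $(p-1)|\nabla u_t|^{p-2}$ which vanishes wherever $|\nabla u_t|=0$; for radially decreasing solutions this happens at the origin, causing the linear operator to degenerate there and allowing the strong maximum principle to fail---this is precisely the mechanism behind the announced counterexample in the regime $p>2$. A secondary technical point is controlling the coefficient $I$ near $\partial B_R$ (where $u_t\to 0$ makes $I$ blow up); this will require a refined barrier or Hopf-type argument that exploits the compensation between the singular weight $I$ and the boundary vanishing of $w$.
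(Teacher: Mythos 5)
Your linearization is the same one the paper uses (the matrix $A$, the coefficient $I$, the sign of the zeroth-order term, and the weak-comparison step all match), but there is a genuine gap at the origin, and it is precisely the point where the paper has to do something different. Since $u,v$ are radially decreasing and $C^{1}$, $\nabla u(0)=\nabla v(0)=0$, so $\nabla u_t(0)=0$ for every $t$; for $1<p<2$ the integrand $|\nabla u_t|^{p-2}$ then blows up, and the entries $a_{ij}(x)$ are unbounded as $x\to 0$. Your bound $(p-1)M^{p-2}$ is only a \emph{lower} ellipticity bound; the upper bound fails near the origin, so $A\notin L^{\infty}$ there and the classical strong maximum principle (which rests on Harnack for uniformly elliptic operators with bounded measurable coefficients) cannot be invoked on any neighbourhood of $0$. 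Hence your dichotomy ``$w\equiv 0$ or $w>0$ in $B_R$'' is not justified; what the linearization gives is $w>0$ in $B_R\setminus\{0\}$ only (and this in fact holds for every $p>1$, since on each annulus $U_r=B_R\setminus B_r$ the radial derivatives are strictly negative, so $A$ is uniformly elliptic and bounded there). The paper therefore applies the strong maximum principle only on such annuli, and handles the origin by an entirely different device: the radial reduction to the first-order system $u_1'=\alpha(r,u_2)$, $u_2'=-\frac{N-1}{r}u_2+\beta_f(r,u_1)$, together with Lemma 3.2 of \cite{cuesta2000strong} (a M\"uller--Kamke type comparison), which exploits that $y\mapsto |y|^{\frac{1}{p-1}-1}y$ is smooth/Lipschitz exactly when $p\le 2$. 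If $u(r')=v(r')$ at some $r'<r_0$, then $r'$ is a minimum of $w\ge 0$, so $u'(r')=v'(r')$, and the ODE comparison forces $v\le u$ on $[r',R)$, contradicting $w>0$ on the annulus. So the true role of the restriction $1<p\le 2$ is this ODE uniqueness/comparison step, not uniform ellipticity over the whole ball; your diagnosis of the $p>2$ failure (degeneracy of the smallest eigenvalue at $0$) has a mirror problem for $1<p<2$ (blow-up of the largest eigenvalue at $0$) which your proposal does not address. For $p=2$, and only then, your argument is complete as written.

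A secondary point: for the boundary derivative you correctly observe that $I(x)=\int_0^1 u_t^{-\delta-1}\,dt$ blows up like $d(x)^{-1-\delta}$ near $\partial B_R$, but you leave the required barrier or Hopf-type argument open; the paper closes this step, once $w>0$ in $B_R$ is established, by invoking Theorem 2.7.1 of \cite{pucci2007maximum}. Without repairing the origin step, however, your argument does not yield $u(0)<v(0)$, and hence does not prove $0<u<v$ in all of $B_R$.
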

	\noi If $f$ and $g$ are $L^\infty$ functions in $B_R,$ then  by the regularity results in \cite{giacomoni2007sobolev}, the solutions $u$ and $v$ belong to $C^{1,\alpha}(\overline{B_R})$. If we assume that $f,g$ are radial and radially decreasing, the solutions are expected to be radially decreasing by a recent work of \cite{esposito2020hopf}.
	\par The existence of multiple solutions of elliptic problems is another interesting area of research. In the third section of this paper, we shall see how an SCP is helpful in obtaining a third solution  when two pairs of ordered sub and super-solutions are known(see also the example given at the end of section \ref{sec:3solthm}). In this regard, we consider the following elliptic problem in a bounded open set $\Om$ in $\mathbb{R}^N:$
	\begin{equation}\begin{array}{rll} \label{p2intro}
			-\Delta_p u =\lambda(\frac{1}{u^\delta}+G(u))  & \text{in } \Omega \;\;  ;
			\;\; u > 0  \text{ in } \Omega , \;\; & u=0   \text{ on } \partial \Omega .
	\end{array}\end{equation}
	We assume that $0<\delta<1$  and the function $G: \mathbb{R} \rightarrow [0,\infty) $ is monotonically increasing in $\mathbb{R}^+$ with $G(0)=0.$  We define the solution operator $A_G$ in definition \ref{Agdefi}, section \ref{sec:3solthm} and prove the three solution theorem.  
	
	\begin{theorem}
		\label{3sol}
		(Three solution theorem) Suppose there exists two pairs of ordered sub and supersolutions $(\psi_1,\phi_1)$ and $(\psi_2,\phi_2)$ of (\ref{p2intro})  with the property $\psi_1\leq \psi_2 \leq \phi_1$, $\psi_1\leq \phi_2 \leq \phi_1$ and $\psi_2 \not \leq \phi_2 $. Additionally assume that $\psi_2,\phi_2$ are not solutions of (\ref{p2intro}) and $A_G(\phi_2)<\phi_2$ and $A_G(\psi_2)>\psi_2.$
		Then there exists at least three solutions $u_i,i=1,2,3$ for (\ref{p2intro}) where $u_1\in [\psi_1,\phi_2]$, $u_2\in [\psi_2,\phi_1]$ and $u_3\in [\psi_1,\phi_1] \setminus([\psi_1,\phi_2] \cup [\psi_2,\phi_1])$.
	\end{theorem}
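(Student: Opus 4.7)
The plan is to run the classical Amann--type three-solution argument: the sub- and supersolution method yields two ordered solutions, and a Leray--Schauder degree computation on nested order intervals produces the third, with the strong comparison principle of Theorem~\ref{MainSCP} serving to exclude fixed points on the boundaries of those intervals.

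First I would apply the sub--supersolution method to each ordered pair. Since $\psi_1\le\phi_2$ and $\psi_2\le\phi_1$, a monotone iteration of the operator $A_G$ (which is compact on $C^{1,\alpha}_0(\overline{\Omega})$ by the regularity of \cite{giacomoni2007sobolev} recalled in Section~2) yields solutions $u_1\in[\psi_1,\phi_2]$ and $u_2\in[\psi_2,\phi_1]$. The closed order intervals $[\psi_1,\phi_2]$ and $[\psi_2,\phi_1]$ are disjoint, because any common element $w$ would satisfy $\psi_2\le w\le\phi_2$, contradicting the hypothesis $\psi_2\not\le\phi_2$; hence $u_1$ and $u_2$ are distinct and none of them can be the sought-after $u_3$.

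Next I would compute Leray--Schauder degrees on the three open order intervals
\begin{equation*}
\mathcal{O}_1=\mathrm{int}\,[\psi_1,\phi_1],\quad \mathcal{O}_2=\mathrm{int}\,[\psi_1,\phi_2],\quad \mathcal{O}_3=\mathrm{int}\,[\psi_2,\phi_1],
\end{equation*}
the interior being taken in $C^{1,\alpha}_0(\overline{\Omega})$. The standard sub--supersolution homotopy gives $\deg(I-A_G,\mathcal{O}_i,0)=1$ for each $i=1,2,3$, provided one first shows that $A_G$ has no fixed point on $\partial\mathcal{O}_i$. This is exactly where the SCP intervenes. For instance, on $\partial\mathcal{O}_2$: if $u=A_G(u)$ with $\psi_1\le u\le\phi_2$ and $u\not\equiv\phi_2$, then the strict hypothesis $A_G(\phi_2)<\phi_2$ (saying $\phi_2$ is a strict supersolution) lets one cast the pair $(u,\phi_2)$ in the comparison form of Theorem~\ref{MainSCP}, with continuous right-hand sides $f\le g$ and $f\not\equiv g$; the SCP then yields $u<\phi_2$ in $\Omega$ and $\partial_\nu\phi_2<\partial_\nu u<0$ on $\partial\Omega$, placing $u$ in the $C^{1,\alpha}$-interior of $[\psi_1,\phi_2]$. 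The symmetric argument with $A_G(\psi_2)>\psi_2$ excludes fixed points on the $\psi_2$-face, and analogous comparisons at $\psi_1,\phi_1$ handle $\partial\mathcal{O}_1$.

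Finally, the additivity of Leray--Schauder degree together with the disjointness of $\mathcal{O}_2,\mathcal{O}_3$ yields
\begin{equation*}
\deg\bigl(I-A_G,\;\mathcal{O}_1\setminus\overline{\mathcal{O}_2\cup\mathcal{O}_3},\;0\bigr)=1-1-1=-1\ne 0,
\end{equation*}
producing a third fixed point $u_3\in[\psi_1,\phi_1]\setminus([\psi_1,\phi_2]\cup[\psi_2,\phi_1])$. I expect the main obstacle to be the boundary-exclusion step in the degree computation: Theorem~\ref{MainSCP} is proved for radial continuous data on a ball, so one must either situate Section~3 in the radial setting $\Omega=B_R$ (consistent with the focus of the paper) or verify that the truncated right-hand side built into $A_G$ preserves the radial monotonicity and continuity hypotheses needed for the SCP, so that each candidate boundary fixed point can be compared strictly with its obstacle $\phi_i$ or $\psi_i$ in the $C^{1,\alpha}$ topology.
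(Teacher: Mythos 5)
Your overall strategy (monotone iteration for $u_1,u_2$, then a degree/index count on nested order intervals for $u_3$) is the same family of argument the paper uses, since the paper simply invokes Amann's fixed point theorem, whose proof is the degree computation you sketch. But there are genuine gaps in the way you propose to carry it out. The most serious one is the boundary-exclusion step: you invoke Theorem \ref{MainSCP}, which is proved only for $\Omega=B_R$, $1<p\le 2$, and radially decreasing solutions with radial data, whereas Theorem \ref{3sol} is stated for an arbitrary bounded smooth domain and all $p\in(1,\infty)$. You notice this tension, but your proposed fixes (restricting to the radial setting) would prove a weaker theorem. The paper does not use Theorem \ref{MainSCP} here at all: from the hypothesis $A_G(\phi_2)<\phi_2$ and the monotonicity of $A_G$ one gets the \emph{strict} pointwise inequality $u_1\le A_G(\phi_2)<\phi_2$ in $\Omega$, and then the strong comparison/Hopf-type result of Giacomoni--Schindler--Tak\'a\v{c} (Theorem 2.3 of \cite{giacomoni2007sobolev}) or Theorem 2.7.1 of \cite{pucci2007maximum} — valid in general domains once the functions are strictly ordered — yields $\partial_\nu u_1>\partial_\nu\phi_2$, hence $\phi_2-u_1\ge c_1 e$, and similarly $u_2-\psi_2\ge c_2 e$. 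The radial SCP of Theorem \ref{MainSCP} enters only later (Proposition \ref{scp2}) as a sufficient condition for verifying the hypotheses $A_G(\psi_2)>\psi_2$, $A_G(\phi_2)<\phi_2$ in examples.

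Two further points would make your degree computation break down as written. First, $\deg(I-A_G,\mathrm{int}[\psi_1,\phi_1],0)$ requires no fixed points on the boundary of that interval, but nothing in the hypotheses prevents a solution from touching $\psi_1$ or $\phi_1$ (only $\psi_2,\phi_2$ are assumed not to be solutions); the classical resolution, built into Amann's theorem and used implicitly by the paper, is to compute the fixed point index of $A_G$ on the order interval $[\psi_1,\phi_1]$ regarded as a retract, which equals $1$ without any boundary exclusion, and to use the strict interiority only for the two subintervals. Second, your ambient space is wrong for compactness and for interiors: $A_G$ maps boundedly into $C_0^{1,\alpha}$ but is not shown (and is not expected) to be compact as a self-map of $C_0^{1,\alpha}(\overline\Omega)$ — compact embeddings only land in $C_0^{1,\alpha'}$ — which is precisely why the paper proves complete continuity of $A_G$ on the space $C_e(\overline\Omega)$ and measures interiority of the order intervals through the inequality $\phi_2-u_1\ge c_1 e$ in the $\|\cdot\|_e$ norm. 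With these corrections (strict ordering from the hypotheses plus the general-domain SCP, the index-on-retracts formulation, and the $C_e$ setting) your argument becomes the paper's proof.
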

	%\label{}
	
	%% The Appendices part is started with the command \appendix;
	%% appendix sections are then done as normal sections
	%% \appendix
	
	\section{Strong Comparison Principle}
	In this section we prove the main result,\textbf{ Theorem \ref{MainSCP}:}
	\begin{proof}
		Given that $f\leq g,$ using the test function $(u-v)^+$ in the weak formulation of the problem we can find that $u(x)\leq v(x)$ $\, \forall x \in B_R.$ Now, for any $0<r<R$, define $U_{r}:= B_{R}\setminus B_{r}.$ Since $u$ and $v$ are radially decreasing in $B_R\setminus \{0\}$ we have  $\frac{d u}{d r}<0$ and $\frac{d v}{d r}<0.$   Next we write $w=v-u$ and following the idea of \cite{giacomoni2007sobolev} the system of equations in $(\ref{qn})$ can be re-written as
		\begin{equation}\label{qn1}\left.\begin{array}{rll}
				-div(A(x)\nabla w)- \lambda B(x)w & = &g-f \geq 0 \ \text{in}\ U_{r}\\
				w& \geq & 0 \ \text{on}\ \partial U_{r}
			\end{array}\right.
		\end{equation}
		for a matrix $A(x)=[a_{ij}(x)]$ and a scalar function $B(x).$ Here,  
		\begin{eqnarray}\nonumber
			&&a_{ij}(x)=\int_{0}^{1}|(1-t)\nabla u(x)+t\nabla v(x)|^{p-2}\Big[\delta_{ij}+(p-2)\frac{((1-t)u_{x_i}+tv_{x_i})((1-t)u_{x_j}+tv_{x_j})}{|(1-t)\nabla u(x)+t\nabla v(x)|^{2}}\Big]dt\\ \nonumber
			&&\mbox{ and } B(x)=-\delta \int_{0}^{1}\frac{dt}{((1-t)u(x)+tv(x))^{\delta +1}}.
		\end{eqnarray}
		Using the assumptions on $u$ and $v,$ we  note that $A(x)=[a_{ij}(x)]$ is uniformly elliptic in $U_r$ for every $r>0.$  We now fix an $r_0>0$ such that $f-g\not \equiv 0$ in $U_{r_0},$ which is possible as $f,g$ are assumed to be continuous in $B_R.$ Now applying the strong maximum principle Theorem 2.5.2 of \cite{pucci2007maximum} we conclude that $w>0$ in $U_{r}$ for all $r<r_0.$  In fact this implies that $w(x)>0$ for all $x\neq 0.$
		
		In the next step, by exploiting the ideas in section 3 of \cite{cuesta2000strong} we will show that $w$ is strictly positive in $B_{r_0}$ as well  .
		Using the radial symmetry of solutions, the problem (\ref{qn}) can be reduced to a system of ODEs:
		\begin{eqnarray}
			u_{1}^{'}=\alpha(r,u_{2}),\; u_{1}(r_{1})=u_{1,0}\nonumber\\
			u_{2}^{'}=-\frac{N-1}{r}u_{2}+\beta_{f}(r,u_{1}),\; u_{2}(r_{1})=u_{2,0}
		\end{eqnarray}
		where $r_{1} \in (0,R)$, $u_{1}(r)=u(r)$, $u_{2}(r)=|u^{'}(r)|^{p-2}u^{'}(r).$  We denote by $\beta_f(r,y)$ the function $-(\frac{\lambda}{y^{\delta}}+f(r)) $ and
		$\alpha(r,y):(0,R)\times \mathbb{R}\rightarrow \mathbb{R}$ is given by
		\begin{equation} \alpha (r,y)= \left\{
			\begin{array}{@{}rl@{}}
				y^{\frac{1}{p-1}} & \text{if}\; y\geq 0\\
				|y|^{\frac{1}{p-1}} & \text{if}\; y<0.
			\end{array}
			\right.
		\end{equation}
		Clearly $u_{1}(R)=u_{2}(0)=0$. Analogously we can write
		\begin{eqnarray}
			v_{1}^{'}=\alpha(r,v_{2}),\; v_{1}(r_{1})=v_{1,0}\nonumber\\
			v_{2}^{'}=-\frac{N-1}{r}v_{2}+\beta_{g}(r,v_{1}),\; v_{2}(r_{1})=v_{2,0}
		\end{eqnarray}
		where $v_{1}(R)=v_{2}(0)=0$. \\
		Suppose $u(r^{'})=v(r^{'})$ for some $r^{'}< r_0 $(where $r_0$ is as in the first part of the proof). As $w\geq 0$ in $B_{R}$, its minimum is attained at $r'$ and hence $\frac{d w}{d r}(r')=0$. Taking $r_{1}=r'$ in the systems of ODE, $u_{1,0}=v_{1,0}$ and $u_{2,0}=v_{2,0}$. For the function $b(x,u)=\lambda u^{-\delta}$ we have $0\leq -\frac{\pa b}{\pa u}\in L^{\infty}_{loc}((-R,R)\times (0,\infty)),$ and hence by using Lemma 3.2 of \cite{cuesta2000strong}, we obtain $v_{1}(r) \leq u_{1}(r)\; \forall \; r \in [r_{1},R)$ which contradicts the fact that $w>0$ in $U_{r_0}. $ Therefore, $0<u<v$ in $B_{R}.$ Finally we note that since $w>0$ in $B_R,$ we can apply Theorem 2.7.1 of Pucci and Serrin\cite{pucci2007maximum} to conclude that
		$\frac{\pa v}{\pa \nu}<\frac{\pa u}{\pa \nu}<0.$
	\end{proof}
	From a careful observation of the above proof we note that the hypothesis of the Theorem \ref{MainSCP} can be modified as in the next theorem and still the strong comparison principle holds. 
	\begin{theorem}
		Let $1<p\leq 2$ and $u, v$ be positive radially decreasing solutions of (\ref{qn}). Also assume that $f,$ $g$ are continuous radial functions in $B_{R}$ such that $f\leq g$ and $f\not \equiv g.$ Then $u(x)<v(x)$ for all $x\in B_R.$
	\end{theorem}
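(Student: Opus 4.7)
The plan is to replay the proof of Theorem \ref{MainSCP} essentially verbatim, but to carefully inspect which steps genuinely require the $C^{1,\alpha}(\overline{B_R})$ regularity and observe that all of them (apart from the boundary-derivative conclusion, which is no longer being claimed) use only interior $C^{1}$ regularity of radially decreasing solutions away from the origin.

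First I would establish the weak inequality $u\le v$ in $B_R$ by testing the difference of the two weak equations with $(u-v)^{+}$, using positivity of $u,v$ in $B_R$ and $f\le g$. Then, for any fixed $r>0$, on the annular domain $U_r=B_R\setminus\overline{B_r}$ the radial monotonicity $u'(r),v'(r)<0$ ensures that $|\nabla u|$ and $|\nabla v|$ are bounded and bounded away from zero, so the matrix
\begin{equation*}
a_{ij}(x)=\int_{0}^{1}|(1-t)\nabla u+t\nabla v|^{p-2}\Bigl[\delta_{ij}+(p-2)\frac{((1-t)u_{x_i}+tv_{x_i})((1-t)u_{x_j}+tv_{x_j})}{|(1-t)\nabla u+t\nabla v|^{2}}\Bigr]dt
\end{equation*}
is uniformly elliptic on $U_r$, and the scalar $B(x)$ is bounded there as well. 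Writing $w=v-u$, the pair $(u,v)$ satisfies
\begin{equation*}
-\mathrm{div}(A(x)\nabla w)-\lambda B(x)w=g-f\ge 0\ \text{in }U_r,\qquad w\ge 0\ \text{on }\partial U_r.
\end{equation*}
Choosing $r_0>0$ so that $g-f\not\equiv 0$ in $U_{r_0}$, which is possible by the continuity of $f,g$ and the assumption $f\not\equiv g$, Theorem 2.5.2 of \cite{pucci2007maximum} yields $w>0$ in $U_r$ for every $r<r_0$, and hence $w>0$ in $B_R\setminus\{0\}$.

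The remaining point is to upgrade strict positivity to the origin, i.e.\ to exclude $u(0)=v(0)$ (or more generally $u(r')=v(r')$ for some $r'<r_0$). For this I would reuse the ODE/initial-value argument of Cuesta--Takac. Reducing the radial equations to the first-order systems for $(u_1,u_2)$ and $(v_1,v_2)$ as in the previous proof, an equality $u(r')=v(r')$ with $r'<r_0$ forces $w'(r')=0$ (since $r'$ is an interior minimum of $w\ge 0$), so the two systems share the same initial data at $r_1=r'$. Since $b(x,u)=\lambda u^{-\delta}$ is locally Lipschitz in $u$ away from zero and $-\partial_ub\ge 0$, Lemma 3.2 of \cite{cuesta2000strong} gives $v_1\le u_1$ on $[r',R)$, contradicting $w>0$ on $U_{r_0}$. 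Hence $u<v$ throughout $B_R$.

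The main subtlety, compared with the proof of Theorem \ref{MainSCP}, is that the $C^{1,\alpha}(\overline{B_R})$ hypothesis there was only used to invoke the boundary-point principle (Theorem 2.7.1 of \cite{pucci2007maximum}) for the normal-derivative inequality on $\partial B_R$. Since the present statement drops that conclusion and retains only the interior strict inequality, the argument above goes through under the weaker hypothesis that $u,v$ are merely positive and radially decreasing. The one point that deserves a line of justification is the interior $C^1$-regularity away from the origin, which is needed so that $A(x)$ is uniformly elliptic on each $U_r$; this follows from standard regularity for the radial ODE with continuous right-hand side, independently of the global regularity of $u,v$ at $\partial B_R$.
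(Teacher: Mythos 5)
Your proposal is correct and takes essentially the same route as the paper: the paper proves this theorem simply by remarking that the proof of Theorem \ref{MainSCP} carries over once the boundary-derivative conclusion is dropped, and your replay — weak comparison via $(u-v)^+$, the linearized uniformly elliptic equation for $w=v-u$ on the annuli $U_r$ with the strong maximum principle of Pucci--Serrin, and the Cuesta--Tak\'a\v{c} ODE argument to rule out an interior touching radius — is exactly that argument, with the correct observation that the $C^{1,\alpha}(\overline{B_R})$ hypothesis (and the sign condition $0\le f$) was only needed for the conclusions no longer claimed.
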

	When $p>2,$ under the given assumptions of the above theorem we can show that $u<v$ in $B_R\setminus\{0\}.$ On the other hand, when $1<p\leq 2,$ our Theorem \ref{MainSCP} uses the smoothness of the map $t\rightarrow t^{\frac{1}{p-1}}$  along with the Muller Kamke theorem \cite{smith2008monotone} to prove $u(0)<v(0).$ In the next example we prove that the above result(Theorem 2.1) need not be true when $p>2.$\\[2mm]
	\textbf{ Counter example to Theorem 2.1 when $p>2$: }
	For $0< \theta< \infty$, define $u_{\theta}(x):=1-r^{\theta}$ and $f_{\theta}(x):=((p-1)(\theta -1)-1+N)\theta^{(p-1)}r^{(p-1)(\theta-1)-1}-\lambda(1-r^{\theta})^{-\delta}$, where $r=|x|$.
	Clearly,
	\begin{eqnarray}
		-\Delta_{p}u_{\theta}-\lambda u_{\theta}^{-\delta}=f_{\theta}\  \text{in}\  B_{1}\nonumber\\
		u_{\theta}=0\  \text{on}\  \partial B_{1}
	\end{eqnarray}
	Also, $u_{\theta}>0$ in $B_{1}$ and $f_{\theta} \in C(B_{1})$ for all $\theta \in (0, \infty)$. We observe that $u_{\theta}(0)=1$ for all $\theta$ and $u_{\theta_{1}}(x)<u_{\theta_{2}}(x)$  for all $x$ in $B_{1}\setminus\{0\}$ when $0<\theta_{1}<\theta_{2}< \infty$. We claim that, we can choose $\theta_{1},\theta_{2}$ and $\lambda>0$ appropriately so that $f_{\theta_{1}}(x)\leq f_{\theta_{2}}(x)$ in $B_{1}$ and thus the strong comparison principle (Theorem 2.1) is violated. To this end, it is enough to prove that $\partial_{\theta}f_{\theta}\geq 0$. Now,
	\begin{eqnarray}
		\partial_{\theta}f_{\theta}(r)&=&(p-1)(p(\theta -1)+N)\theta^{p-2}r^{(p-1)(\theta-1)-1}+\nonumber \\
		& &(p-1)(p(\theta-1)+N-\theta)\theta^{p-1}r^{(p-1)(\theta-1)-1}ln(r)+\nonumber \\
		& & (-\lambda \delta(1-r^{\theta})^{-\delta-1}r^{\theta}ln(r)) \nonumber
	\end{eqnarray}
	Define $l_{p}(\theta):=\frac{1}{\theta}+\frac{1}{p(\theta-1)+N-\theta}>0$ for $1<\theta<\infty$. If $r\in [e^{-l_{p}(\theta)},  1],$ the first two summands of $\partial_{\theta}f_{\theta}$ give non-negative sum and since $\lambda>0$ the third summand is also positive. This gives $\partial_{\theta}f_{\theta}(r)\geq0$ when $e^{-l_p(\theta)}\leq r\leq 1.$ Next when $r\in (0,e^{-l_{p}(\theta)})$, we first choose $\theta \geq \frac{p}{p-2}$ so that we get
	\begin{eqnarray}\nonumber
		(p-1)(p(\theta-1)+N-\theta)\theta^{p-1}r^{(p-1)(\theta-1)-1-\theta}-\lambda \delta(1-r^{\theta})^{-\delta-1}
		\leq (p-1)(p(\theta-1)+N-\theta)\theta^{p-1}-\lambda\delta.
	\end{eqnarray}
	Now we choose $\lambda$ large enough, for instance $\lambda \delta \geq (p-1)(p(\theta-1)+N-\theta)\theta^{p-1}$ so that the sum of last two terms in $\pa_\theta f_\theta (r)$ is positive.  The first term of $\pa_\theta f_\theta (r)$ is always positive and thus $\pa_\theta f_\theta (r)\geq 0$ for $r\in (0, e^{-l_p(\theta)})$ as well.  Thus we conclude that  the strong comparison principle does not hold true if we choose $\frac{p}{p-2} \leq \theta_1<\theta_2 <\infty$ and $\lambda$ large enough.\hfill\qed

	\section{Three Solution Theorem} \label{sec:3solthm}
	In this section we consider the following quasilinear BVP with singular nonlinearity:
	\begin{equation}\begin{array}{rll} \label{p2}
			-\Delta_p u =\lambda(\frac{1}{u^\delta}+G(u))  & \text{in } \Omega \;\;  ;
			\;\; u > 0  \text{ in } \Omega , \;\; & u=0   \text{ on } \partial \Omega .
	\end{array}\end{equation}
	$\Omega$ is a bounded open subset of  $\mathbb{R}^N$, $N \geq 1$ with smooth boundary $\partial \Omega$ and $0<\delta<1.$ The function $G: \mathbb{R} \rightarrow [0,\infty) $ is monotonically increasing in $\mathbb{R}^+$ with $G(0)=0.$ We prove the existence of three solutions of (\ref{p2}) whenever there exists two pairs of ordered sub and super solutions. We use a technique similar to that in \cite{dhanya2015three}, where the authors have proved this result for the linear case $p=2$. We remark here that all the  results in this section can be concluded for $-\Delta_p u =\lambda(\frac{c}{u^\delta}+G(u))$ where $c$ is any positive constant. 
	\begin{definition}
		A function $u\in C^{1,\alpha}(\Bar{\Omega})$ is said to be a sub-solution(super solution) of (\ref{p2}) if $u>0$ in $\Omega,$ $u=0$ on $\partial \Omega$ and
		
		$$\int_{\Omega}|\nabla u|^{p-2} \nabla u \nabla \phi \leq (\; \geq \;)\lambda\int_{\Omega}(\frac{1}{u^\delta}+G(u)) \phi \mbox{\;\;  } $$
		holds for all non-negative test functions $\phi\in C^{\infty}_c(\Omega)$.
		If a function $u$ is both sub solution and super solution, then it is called a solution of (\ref{p2}).
	\end{definition}
	\begin{definition}
		Given $\lambda>0$ and $0<\delta<1$, we define $\xi_\lambda $ as the unique positive solution of $ -\Delta_p \xi_\lambda= \lambda \xi_\lambda^{-\delta}$ in $\Omega$; $\xi_\lambda|_{\partial\Omega}=0$. By \cite{giacomoni2007sobolev}, we know that there exists positive constants $l, L$ for which $l \, d(x) \leq \xi_\lambda \leq L\, d(x)$, where $d(x)=d(x,\pa \Om)$.
	\end{definition}
	\begin{definition}\label{Agdefi}
		For a given $\la>0,$ we define the map $A_G : C_0(\Bar{\Omega}) \to C_{0}^{1,\alpha}(\Bar{\Omega})$ as $A_G(u)=w$ iff w is a weak solution of $-\Delta_p w -\frac{\lambda}{w^\delta} =\lambda G(u)  \text{ in } \Omega \; ; \; w > 0   \text{ in } \Omega \; , \;
		w=0    \text{ on } \partial \Omega $.
	\end{definition}
	\begin{lemma}
		The map $A_G$ is well defined, monotone operator from $C_0(\Bar{\Omega})$ to $C_0^{1,\alpha}(\Bar{\Omega})$.
	\end{lemma}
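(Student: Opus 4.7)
The plan is to establish, for each $u\in C_0(\overline{\Om})$, (i) existence of $w=A_G(u)$, (ii) uniqueness (so that $A_G$ is single-valued), and (iii) monotonicity $u_1\le u_2 \Rightarrow A_G(u_1)\le A_G(u_2)$. Items (ii) and (iii) will be packaged into a single weak-comparison argument.

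For existence and regularity, I would note that since $u\in C_0(\overline{\Om})$ and $G$ is continuous with $G(0)=0$, the datum $\la G(u)$ lies in $L^\infty(\Om)$ and is non-negative. The singular problem
\[
-\De_p w-\la w^{-\de}=\la G(u) \text{ in } \Om, \quad w>0 \text{ in } \Om, \quad w=0 \text{ on } \pa\Om
\]
then falls inside the framework of \cite{giacomoni2007sobolev}, which furnishes a weak solution $w\in C_0^{1,\al}(\overline{\Om})$ for some $\al\in(0,1)$, together with the two-sided bound $l\,d(x)\le w\le L\,d(x)$ in $\Om$ already recorded in Definition \ref{Agdefi}. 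This accounts for the codomain of $A_G$.

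For (ii) and (iii) together, I would take $u_1\le u_2$ in $C_0(\overline{\Om})$ with associated solutions $w_1,w_2$, and test the difference of the two equations with $(w_1-w_2)^+\in W_0^{1,p}(\Om)$. The resulting identity reads
\[
\int_{\Om}\bigl(|\na w_1|^{p-2}\na w_1-|\na w_2|^{p-2}\na w_2\bigr)\cdot\na(w_1-w_2)^+\,dx - \la\int_{\Om}\Bigl(\tfrac{1}{w_1^\de}-\tfrac{1}{w_2^\de}\Bigr)(w_1-w_2)^+\,dx = \la\int_{\Om}\bigl(G(u_1)-G(u_2)\bigr)(w_1-w_2)^+\,dx.
\]
The first term on the left is non-negative by the standard $p$-Laplacian monotonicity inequality; the second is also non-negative because on $\{w_1>w_2\}$ one has $w_1^{-\de}<w_2^{-\de}$, while the integrand vanishes elsewhere. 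The right-hand side is non-positive by monotonicity of $G$. Hence both sides must vanish, and the strict negativity of the singular integrand on $\{w_1>w_2\}$ forces $|\{w_1>w_2\}|=0$, i.e.~$w_1\le w_2$. Taking $u_1=u_2$ yields uniqueness, while the general case $u_1\le u_2$ yields monotonicity of $A_G$.

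The main technical point I anticipate is admissibility of $(w_1-w_2)^+$ as a test function against the singular terms $w_i^{-\de}$. It suffices to check $w_i^{-\de}(w_1-w_2)^+\in L^1(\Om)$, which follows from the boundary bound $w_i\ge l\,d(x)$: indeed $w_i^{-\de}(w_1-w_2)^+\le C\,d(x)^{1-\de}$, and this is integrable because $\de<1$. Once admissibility is settled the rest is a direct computation, and both well-definedness and monotonicity of $A_G$ drop out together.
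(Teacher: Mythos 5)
Your proof is correct and takes essentially the same route as the paper: existence and the $C_0^{1,\alpha}$ regularity are delegated to the results of Giacomoni--Schindler--Tak\'a\v{c}, and uniqueness together with monotonicity come from the weak comparison argument with $(w_1-w_2)^+$, which the paper leaves implicit by simply noting that monotonicity follows since $G$ is increasing. The only slip is cosmetic: the two-sided bound $l\,d(x)\le w\le L\,d(x)$ is recorded for $\xi_\lambda$ in the definition preceding Definition \ref{Agdefi}, not in Definition \ref{Agdefi} itself, though it does transfer to $w=A_G(u)$ by comparison since $\la G(u)\ge 0$ and $\la G(u)\in L^\infty(\Om)$.
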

	\begin{proof}
		For a given $u,$ existence and uniqueness of a non-negative weak solution $w=A_G(u)\in W^{1,p}_0(\Omega)$ can be proved by the minimization of a suitable energy functional as discussed in Lemma 3.1 of \cite{giacomoni2007sobolev} {or by following the idea of proof of Theorem 3.2 in \cite{GST}.} Again using the results in Appendix B of \cite{giacomoni2007sobolev}, it can be shown that $w\in C^{1,\alpha}(\Bar{\Omega}).$ Now the monotonicity of the map $A_G$ easily follows as $G$ is assumed to be a monotonically increasing function.
	\end{proof}
	
	We define $e \in C^{1,\alpha}(\Bar{\Omega})$ as the unique positive solution of $ -\Delta_p e=1 \ \text{in } \Omega$ with zero Dirichlet boundary condition.  $C_e(\Bar{\Omega})$ is the set of functions in  $C_0(\Bar{\Omega})$ such that $|u|\leq t e(x)$ for some $t>0.$ $C_e(\Bar{\Omega})$ is a Banach space equipped with the norm $\|u\|_e = \inf \{t>0: |u(x)|\leq t e(x)\}$ (see \cite{dhanya2015three} for more details).
%	\textcolor{blue}{In the next proposition we will prove that the map $A_G$ is compact and continuous from $C_e(\Bar{\Omega)}$ to itself. In \cite{GST}, a similar idea is used to prove the compactness of an operator for a coperative system of equations with sigular nonlinearity. Authors have also proved the existence of solution using a fixed point technique.}
	\begin{proposition}
The map $A_G:C_e(\Bar{\Omega}) \longrightarrow C_e(\Bar{\Omega})$ is completely continuous.
	\end{proposition}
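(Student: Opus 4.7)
The plan is to establish three properties of $A_G$: that it takes $C_e(\Bar{\Omega})$ into itself, that it sends bounded sets to relatively compact sets in $C_e$, and that it is continuous in the $C_e$-norm; together these give complete continuity. For the first, fix $u \in C_e(\Bar{\Omega})$ and set $w = A_G(u)$. Since $w \in C^{1,\alpha}(\Bar{\Omega})$ vanishes on $\partial \Omega$, the mean value inequality gives $w(x) \leq \|\nabla w\|_{\infty}\, d(x,\partial\Omega)$, while Hopf's lemma applied to $e$ yields a constant $c > 0$ with $e(x) \geq c\, d(x,\partial\Omega)$ near the boundary; combined with $e \geq c' > 0$ on any interior compact set, this produces $w \leq t\, e$ for some $t > 0$, so $w \in C_e(\Bar{\Omega})$.

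For relative compactness, I would take $\{u_n\} \subset C_e(\Bar{\Omega})$ bounded, so that $\|u_n\|_\infty \leq M\|e\|_\infty$ and, by continuity and monotonicity of $G$, the sequence $\lambda G(u_n)$ is uniformly bounded in $L^\infty(\Omega)$. Weak comparison of $w_n := A_G(u_n)$ with $\xi_\lambda$ (justified since $\lambda G(u_n) \geq 0$) produces the uniform lower bound $w_n \geq \xi_\lambda \geq l\, d(x,\partial\Omega)$, which controls the singular term $\lambda/w_n^\delta$ uniformly. The Sobolev-H\"older regularity results of Appendix B of \cite{giacomoni2007sobolev} then provide a uniform bound $\|w_n\|_{C^{1,\alpha}(\Bar{\Omega})} \leq K$. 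By the compact embedding $C^{1,\alpha}(\Bar{\Omega}) \hookrightarrow C^1(\Bar{\Omega})$, a subsequence $w_{n_k}$ converges in $C^1(\Bar{\Omega})$ to some $w$, and since both $w_{n_k}$ and $w$ vanish on $\partial\Omega$, the estimate $|w_{n_k}(x) - w(x)| \leq \|\nabla(w_{n_k} - w)\|_\infty\, d(x,\partial\Omega) \leq C\|\nabla(w_{n_k} - w)\|_\infty\, e(x)$ upgrades this to convergence in the $C_e$-norm.

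For continuity, assume $u_n \to u$ in $C_e$; then $u_n \to u$ uniformly and $\lambda G(u_n) \to \lambda G(u)$ uniformly. By the compactness step, any subsequence of $w_n := A_G(u_n)$ has a further subsequence converging in $C_e$ to some $w^*$. Passing to the limit in the weak formulation of $-\Delta_p w_{n_k} - \lambda/w_{n_k}^\delta = \lambda G(u_{n_k})$ (the uniform lower bound $w_{n_k} \geq l\, d(x,\partial\Omega)$ supplies the domination needed for the singular term) shows that $w^*$ solves the equation defining $A_G(u)$; uniqueness of that solution, which follows from the strict monotonicity of $w \mapsto -\Delta_p w - \lambda w^{-\delta}$ via a standard $(w_1 - w_2)^+$ test-function argument, forces $w^* = A_G(u)$, so the whole sequence converges. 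The main technical obstacle is the uniform $C^{1,\alpha}$ estimate, since the singular nonlinearity $\lambda/w^\delta$ is not directly admissible in classical $p$-Laplace regularity theory; it is precisely the comparison $w_n \geq \xi_\lambda$ that reduces the problem to the framework of \cite{giacomoni2007sobolev} with constants independent of $n$.
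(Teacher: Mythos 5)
Your proposal is correct and follows essentially the same route as the paper: weak comparison with $\xi_\lambda$ to get the uniform lower bound $w \geq l\, d(x)$ that controls the singular term, the uniform $C^{1,\alpha}$ estimate from Appendix B of \cite{giacomoni2007sobolev}, compactness of a H\"older-type embedding, and identification of the limit through uniqueness together with a subsequence argument. The only difference is presentational: the paper factors the argument through continuity of $A_G : C_0(\Bar{\Omega}) \to C_0^{1,\alpha}(\Bar{\Omega})$ and the embedding chain into $C_e(\Bar{\Omega})$, while you verify the invariance, compactness and continuity in the $C_e$-norm directly.
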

	\begin{proof}
		Recalling the continuous embedding $C_0^{1,\alpha}(\Bar{\Omega}) \hookrightarrow C_0^1(\Bar{\Omega}) \hookrightarrow C_e(\Bar{\Omega}) \hookrightarrow C_0(\Bar{\Omega})$ it is enough to show that $A_G:C_0(\Bar{\Omega}) \longrightarrow C_0^{1,\alpha}(\Bar{\Omega})$ is continuous. Let \{$u_h$\} $\subset C_0(\Bar{\Omega})$ be such that $\|u_h-u\|_{C_0(\Bar{\Omega})} \rightarrow 0 $ as $h \rightarrow 0$. Let  $A_G(u)=w$ and $A_G(u_h)=w_h$. Since $G$ is positive, we get  $-\Delta_p w_h -\frac{\lambda}{w_h^\delta} =\lambda G(u_h) \geq 0 $. Using weak comparison principle, we conclude $w_h \geq \xi_{\lambda} \geq c d(x)$. Thus, for some positive constants $C$ and $C_1$ independent of $h,$ we have
		$$w_h^{-\delta} + G(u_h) \leq \frac{C}{d(x)^\delta} \leq  \frac{C_1}{\xi_{\lambda}^{\delta}}.$$  Again from the weak comparison principle we have  $\xi_{\lambda} \leq w_h \leq k \xi_{\lambda}.$ Now we can use Theorem B.1 of \cite{giacomoni2007sobolev} and obtain a $C_0^{1,\alpha}$ uniform bound for $\{w_h\}$, that  is, there exists $M>0$ such that 
		\begin{equation*}
		   \sup_h \|w_h\|_{C_0^{1,\alpha}(\Bar{\Omega})} \leq M. 
		\end{equation*}
		By the compact embedding $C_0^{1,\alpha}(\Bar{\Omega}) \subset \subset C_0^{1,\alpha'}(\Bar{\Omega})$ where $0<\alpha'<\alpha,$ the sequence  $w_h$ has a convergent subsequence in $C_0^{1,\alpha'}(\Bar{\Omega})$, namely $\{w_{h_i}\}$. The uniqueness of weak solution of the equation
		$-\Delta_p w -\frac{\lambda}{w^\beta}=\lambda G(u) $ would imply that $w_{h_i}\rightarrow w$ in  ${C_0^{1,\alpha'}(\Bar{\Omega})}$. Through a standard subsequence argument it can be shown that $w_h\rightarrow w$ in $C^{1,\alpha'}_0(\Bar{\Om})$ and thus the the map $A_G : C_0(\Bar{\Om})\rightarrow C_0^{1,\alpha'}(\Bar{\Omega})$ is continuous.  Once again using Ascoli Arzela theorem it is easy to prove that the map $A_G$ is completely continuous from $C_e(\Bar{\Om})$ to itself.
	\end{proof}
{Authors in \cite{GST} consider a system of quasilinear equations with a singular non-linearity and prove that the  associated operator is completely continuous. Furthermore, they show the  existence of its  solution using Schauder's fixed point theorem. Our aim is to use a fixed point theorem due to Amann \cite{amann1976fixed} to prove the existence of three solutions to \eqref{p2}.}
	\\
$\mathbf{Proof \; of \; Theorem \; \ref{3sol}: }$ Existence of two solutions  $u_1\in [\psi_1,\phi_2]$ and  $u_2\in [\psi_2,\phi_1]$ is straight forward as the map $A_G$ is monotone and completely continuous. The proof of existence of a third solution follows as in the case of Laplacian(see Theorem 3.9 of \cite{dhanya2015three}), but we shall briefly describe the underlying idea here. Using the given condition $A_G(\phi_2)<\phi_2$ we note that $\psi_1\leq u_1 < \phi_2.$  Also,
	\begin{equation} \begin{array}{rll}
			-\Delta_p u_1  - \frac{\lambda}{u_1^\delta} & =& \lambda G(u_1) \mbox{ in } \Omega \\[2mm]
			-\Delta_p \phi_2  - \frac{\lambda}{\phi_2^\delta} & \geq &\lambda G(\phi_2) \mbox{ in } \Omega\\
			u_1=\phi_2 &= &0 \mbox{ on } \partial \Omega.
	\end{array}  \end{equation}
	Since $u_1 < \phi_2$ and $G$ is strictly increasing, using Theorem 2.3 of \cite{giacomoni2007sobolev} (or by Theorem 2.7.1 of Pucci and Serrin\cite{pucci2007maximum}) we have $\frac{\pa u_1}{\pa \nu} >\frac{\pa \phi_2}{\pa \nu}$, or $\phi_2-u_1 \geq c_1 e(x)$ for some positive constant $c_1$.
	Similarly  for some constant $c_2>0$ we can show that  $u_2-\psi_2> c_2 e(x).$ Now the open balls,
	$$B_k =\{z\in C_e(\Bar{\Om})\} :\|z-u_k\|_{e} <c_k\}$$
	for $k=1,2$  lie entirely inside $X_1= [\psi_1,\phi_2]$ and $X_2= [\psi_2,\phi_1]$ respectively. Thus we prove that $X_i$ for $i=1,2$ have non-empty interior and appeal to the fixed point theorem of Amann \cite{amann1976fixed} to conclude the existence of a third solution $u_3\in [\psi_1,\phi_1] \setminus([\psi_1,\phi_2] \cup [\psi_2,\phi_1]).$ \hfill\qed\\[3mm]
	Next we wish to understand under what hypothesis the conditions $A_G(\psi_2)>\psi_2$ and $A_G(\phi_2)<\phi_2$ are valid. Let $\psi_2$ be a sub-solution of $(\ref{p2}),$ then $\psi_2$ is also a  weak solution of the BVP
\begin{equation} 
	\left.
	  \begin{aligned}
	       -\Delta_p \psi_2  - \frac{\lambda}{\psi_2^\delta}  =  \la \Tilde{G}(x)  \mbox{ in } \Omega&  \\
	    \psi_2 = 0 \ \text{on}\ \pa \Om&
	  \end{aligned}
	  \right\}
	\end{equation}
% 	\begin{eqnarray}
% 	    -\Delta_p \phi_2  - \frac{\lambda}{\phi_2^\delta} & = & \la \Tilde{G}(x)\, \geq \,   \la G(\phi_2) \mbox{ in } \Omega \nonumber \\
% 	    \phi_2 = 0 \ \text{on}\ \pa \Om
% 	\end{eqnarray}
	for some function $\tilde{G}$ defined on $\Omega$. Since $\psi_2$ is a subsolution we have $\tilde{G}(x)\leq G \circ \psi_2(x).$ In most of the applications, when $\psi_2$ is known the function $\tilde{G}$ happens to be continuous in $\Omega.$ 
	%Let $\Bar{\phi_2}$ denote $A_{g}(\phi_{2})$.
	\begin{comment}
	\begin{equation}\label{seteqn}
		\begin{array}{rllrll}
			-\Delta_p \phi_2  - \frac{\lambda}{\phi_2^\delta} & = & \la \Tilde{g}(x)\, \geq \,   \la g(\phi_2) \mbox{ in } \Omega %-\Delta_p \psi_2  - \frac{\lambda}{\psi_2^\delta} & = & \la \underline{g}(x)\, \leq \,  \la  g(\psi_2) \mbox{ in } \Omega\\[2mm]
			
			\\[3mm] -\Delta_p \Bar{\phi_2}  - \frac{\lambda}{\Bar{\phi_2}^\delta} & = & \la  g(\phi_2) \mbox{ in } \Omega %-\Delta_p \psi_2 ' - \frac{\lambda}{\psi_2'^\delta} & = & \la  g(\psi_2) \mbox{ in } \Omega\\[3mm]
			\\[3mm] \phi_2=\Bar{\phi_2}&=&0 \mbox{ on } \pa\Omega \;\;\;\;\;\; & %\psi_2=\psi_2'&=&0 \mbox{ on } \pa\Omega
		\end{array}
	\end{equation}
	where 	$\Bar{\phi_2}=A_g(\phi_2)$.
	\end{comment} 
	If we write $\omega=A_G(\psi_2),$ then 
	\begin{equation} 
	\left.
	  \begin{aligned}
	       -\Delta_p \omega  - \frac{\lambda}{\omega^\delta}  =  \la G(\psi_2)  \mbox{ in } \Omega&  \\
	    \omega = 0 \ \text{on}\ \pa \Om&
	  \end{aligned}
	  \right\}
	\end{equation}
	If $\Omega= B_R$ is a ball and $1<p\leq 2,$ Theorem 1.1 provides a sufficient condition that ensures the hypothesis of the three solution theorem. This result is stated as the following proposition. Similar results hold true for $\phi_2$ as well.
	\begin{proposition}\label{scp2}
		Let $1<p\leq 2$ and $\Omega=B_R.$ Suppose $\psi_2$ and $A_G({\psi_2})$ are radially decreasing functions in $B_R$.  If $\Tilde{G}(x)$ is a continuous radial function in $B_R,$ then we have $A_G({\psi_2})>\psi_{2}$.
	\end{proposition}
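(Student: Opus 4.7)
\textbf{Proof plan for Proposition \ref{scp2}.}
The plan is to apply the strong comparison principle (Theorem \ref{MainSCP}, or rather its restatement given immediately after its proof, where the sign hypothesis on $f$ is dropped) with the pair $u=\psi_2$ and $v=\omega:=A_G(\psi_2)$. By the very definition of $\Tilde{G}$ in the paragraph preceding the proposition, the sub-solution $\psi_2$ satisfies
\[
-\Delta_p \psi_2 - \frac{\lambda}{\psi_2^\delta} = \lambda \Tilde{G}(x) \quad \text{in } B_R, \qquad \psi_2 = 0 \text{ on } \pa B_R,
\]
and by the definition of $A_G$, $\omega$ satisfies
\[
-\Delta_p \omega - \frac{\lambda}{\omega^\delta} = \lambda G(\psi_2(x)) \quad \text{in } B_R, \qquad \omega = 0 \text{ on } \pa B_R.
\]
These two PDEs match the form of (\ref{qn}) with source terms $f=\lambda \Tilde{G}$ and $g=\lambda\, G\circ\psi_2$.

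The next step is to check that every hypothesis of the comparison theorem is in place. Continuity and radial symmetry of $f$ are part of the assumption on $\Tilde{G}$, and those of $g$ follow from the radiality and continuity of $\psi_2$ together with the continuity of $G$. The sub-solution inequality gives $\Tilde{G}(x)\leq G(\psi_2(x))$, i.e.\ $f\leq g$. Both $\psi_2$ and $\omega$ belong to $C^{1,\alpha}(\overline{B_R})$ (the former by the definition of sub-solution, the latter by the regularity of $A_G$ established in the lemma of Section \ref{sec:3solthm}), are positive in $B_R$, and are radially decreasing by hypothesis. With all ingredients verified, the strong comparison principle delivers $\psi_2<\omega$ pointwise in $B_R$, which is exactly the claimed inequality $A_G(\psi_2)>\psi_2$.

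The only subtle point, and in my view the main obstacle, is ensuring the strict inequality $f\not\equiv g$. If $\Tilde{G}\equiv G\circ\psi_2$ then $\psi_2$ itself would be a weak solution of (\ref{p2}), and uniqueness of $A_G(\psi_2)$ would force $\omega=\psi_2$, producing only equality. In the situation relevant to Theorem \ref{3sol}, $\psi_2$ is assumed not to be a solution, and so $\Tilde G\not\equiv G\circ\psi_2$; this is where the assumption hidden in the surrounding context is used. Once this is settled, the proof reduces to a direct invocation of Theorem \ref{MainSCP}, since the heavy technical work has already been done in Section 2.
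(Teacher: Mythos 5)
Your proposal is correct and follows essentially the same route as the paper, which offers no separate proof of Proposition \ref{scp2} beyond noting that it is a direct application of the strong comparison principle of Section 2 with $u=\psi_2$, $v=A_G(\psi_2)$, $f=\lambda\Tilde{G}$ and $g=\lambda G\circ\psi_2$. You also rightly flag the point the paper leaves implicit, namely that $\Tilde{G}\not\equiv G\circ\psi_2$ (equivalently, $\psi_2$ is not itself a solution, as assumed in Theorem \ref{3sol}) is needed to get the strict conclusion rather than equality.
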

	Next we shall state another condition which can be useful to prove the three solution theorem.
	\begin{proposition}
		\label{scp3}
		Let $1<p<\infty$ and $\Omega$ is an arbitrary bounded open set with smooth boundary.  Assume that $\Tilde{G}(x)< G(\psi_2(x))$ for all $x\in \Omega.$  Then $A_G(\psi_2)>\psi_2.$
	\end{proposition}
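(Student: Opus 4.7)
The plan is to reduce Proposition \ref{scp3} to the strong comparison principle for singular $p$-Laplace equations of Giacomoni–Schindler–Takáč \cite{giacomoni2007sobolev}, which was designed for exactly this setting. Writing $\omega := A_G(\psi_2)$, by definition $\omega$ solves $-\Delta_p \omega - \lambda \omega^{-\delta} = \lambda G(\psi_2)$ with zero Dirichlet data, while $\psi_2$ satisfies $-\Delta_p \psi_2 - \lambda \psi_2^{-\delta} = \lambda \tilde{G}(x)$ with the same boundary condition. The pointwise strict inequality $\tilde{G}(x) < G(\psi_2(x))$ in $\Omega$ supplies the gap between the two forcing terms that will drive the argument.

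The steps I would carry out are: first, obtain $\omega \geq \psi_2$ in $\Omega$ by a routine weak comparison---subtract the two equations, test against $(\psi_2-\omega)^+ \in W_0^{1,p}(\Omega)$, and combine monotonicity of $-\Delta_p$ with that of $t \mapsto -t^{-\delta}$ to force $(\psi_2 - \omega)^+ \equiv 0$. Next, set $w := \omega - \psi_2 \geq 0$ and linearize the difference exactly as in the proof of Theorem \ref{MainSCP}, producing a matrix $A(x)$ and a scalar $B(x) \leq 0$ so that
\[
-\mathrm{div}\bigl(A(x)\nabla w\bigr) - \lambda B(x) w \;=\; \lambda\bigl(G(\psi_2) - \tilde{G}\bigr) \quad \text{in } \Omega,
\]
with the right-hand side strictly positive everywhere in $\Omega$. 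Applying the strong maximum principle (Theorem 2.5.2 of \cite{pucci2007maximum}) should then give $w > 0$ in $\Omega$, which is precisely the desired conclusion $A_G(\psi_2) > \psi_2$.

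The main obstacle is the degenerate or singular character of the linearized coefficients: for $p > 2$ the matrix $A(x)$ can vanish at critical points of $\psi_2$ and $\omega$; for $1 < p < 2$ it can blow up there; and the coefficient $B(x)$ is unbounded near $\pa\Omega$ because of the $-\delta-1$ power appearing in its definition. I would overcome this in the manner of \cite{giacomoni2007sobolev}: the bound $\psi_2,\,\omega \geq c\, d(x,\pa\Omega)$ (available since both are $C^{1,\alpha}$ solutions of singular problems bounded below by $\xi_\lambda$) keeps $B$ locally bounded inside $\Omega$, and on any compactly contained subdomain $\Omega' \subset\subset \Omega$ the strong maximum principle can be justified on the uniformly elliptic restriction of the linearized operator; exhausting $\Omega$ by such $\Omega'$ then delivers $w > 0$ throughout $\Omega$.
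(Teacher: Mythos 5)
Your opening move is exactly the paper's proof: Proposition \ref{scp3} is established there by a one-line citation of the strong comparison principle of Giacomoni--Schindler--Tak\'a\v{c} (Theorem 2.3 of \cite{giacomoni2007sobolev}), alternatively Proposition 4 of \cite{papageorgiou2015bifurcation}, whose hypothesis of \emph{strictly} ordered right-hand sides is precisely the assumption $\tilde{G}(x)<G(\psi_2(x))$ in $\Omega$. Your weak comparison step giving $\omega=A_G(\psi_2)\geq\psi_2$ is also fine.

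However, the self-contained argument you sketch in place of that citation has a genuine gap. After linearizing, you propose to apply the strong maximum principle (Theorem 2.5.2 of \cite{pucci2007maximum}) on subdomains $\Omega'\subset\subset\Omega$, claiming the restriction of the linearized operator is uniformly elliptic there. Compact containment only tames the zero-order coefficient $B(x)$, whose blow-up is a boundary phenomenon; it does nothing about the matrix $A(x)$, which for $p>2$ degenerates and for $1<p<2$ blows up at interior points where $\nabla\psi_2$ and $\nabla\omega$ both vanish. In an arbitrary bounded domain and for all $1<p<\infty$ (the setting of the proposition), such common critical points can occur anywhere in the interior --- e.g.\ at interior maxima or on plateau regions --- so no exhaustion by compact subsets restores uniform ellipticity with bounded coefficients, and the Pucci--Serrin strong maximum principle cannot be invoked on the linearized equation. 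This is exactly the classical obstruction to strong comparison for the $p$-Laplacian, and it is why the paper's own Theorem \ref{MainSCP} is confined to radially decreasing solutions in a ball (gradients nonvanishing away from the origin) and still needs a separate ODE/M\"uller--Kamke argument at the origin; the strict inequality $\tilde G<G(\psi_2)$ is exploited in \cite{giacomoni2007sobolev} in a more essential way (a positive gap on compact subsets feeding a finer comparison argument), not merely as positivity of the right-hand side of the linearized equation. The repair is simply to do what the paper does: invoke the cited strong comparison principles directly rather than re-deriving them by linearization.
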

	\begin{proof}
		Proof is a straightforward application of the strong comparison principle in Theorem 2.3 of \cite{giacomoni2007sobolev} or Proposition 4 of \cite{papageorgiou2015bifurcation}.
	\end{proof}

%	\begin{remark}\label{conditionG}
%	If there exists a constant $k>0$ such that $G(t)+ kt$ is increasing in $[0,\infty)$, then the assumption $G$ is monotonically increasing can also be relaxed while using Proposition \ref{scp3}.
%	\end{remark}
	
% 	\begin{remark}\label{conditionG}
% 	     Observe that, same results can be concluded for $-\Delta_p u =\lambda(\frac{c}{u^\delta}+G(u))$ where $c$ is any positive constant. %The assumption $G$ is monotonically increasing can also be relaxed if there exists a constant $k>0$ such that $G(t)+ kt$ is increasing in $[0,\infty).$
% 	\end{remark}

	We demonstrate the three solution theorem for an elliptic equation with singularity through an example. %In \cite{ko2011multiplicity}, authors have constructed two pairs of sub and super-solutions for the BVP in the following example and have provided two solutions. This work is extended to the p-q-Laplacian operator in a ball by Acharya et al.\cite{acharya2021existence}. One can define the map $A_G$ analogously for the p-q-Laplacian operator, and the continuity of $A_G$ follows using the estimation in \cite{giacomoni2021sobolev}. A strong comparison principle for the p-q-Laplacian operator is proved in Proposition 7 of \cite{papageorgiou2020nonlinear}. It provides the Proposition 3.3 of our paper. In turn, one can use it to establish a three solution theorem. Arora \cite{arora2021multiplicity} has proven a result in this direction with the p-q-Laplacian operator. We shall prove the existence of a third solution for the problem \eqref{kopr}, proposed at \cite{ko2011multiplicity}, although it is an easy consequence of Arora's work \cite{arora2021multiplicity}.\\
\vspace{0.1 cm}\\
\underline{\textbf{Example 3.1}} \label{example} Ko et al. \cite{ko2011multiplicity} have considered the boundary value problem
	\begin{equation} \label{kopr}
	\left.
	  \begin{aligned}
	       -\Delta_p u &= \lambda \frac{F(u)}{u^\delta} \ \text{in} \  \Om
	       \\
	       u &=0 \  \text{on} \ \partial\Om 
	       \\
	       u & > 0 \ \text{in} \ \Om
	  \end{aligned}
	  \right\}
	\end{equation}
	where $1<p<\infty$, $\delta \in (0,1)$, $\lambda$ is a positive parameter and $\Omega$ is a bounded domain in $\mathbb{R}^N,N \geq1$, with smooth boundary.  It is also assumed that $F \in C^1([0,\infty))$ is a non-decreasing function with $F(u)>0$ for all $u \geq 0$  and $\lim_{u \rightarrow \infty}\frac{F(u)}{u^{\delta+p-1}}=0$. With a few more technical assumptions on $F$, in  \cite{ko2011multiplicity},  authors have established the existence of two positive solutions $u_1, u_2$ of (\ref{kopr}) by constructing two pairs of sub-super solutions $(\psi_1,\phi_1)$and $(\psi_2,\phi_2)$  whenever $\la \in (\lambda_{*}, \lambda^{*})$ as given in the Theorem 1.3 of \cite{ko2011multiplicity}. A model problem was given by $F(u)=e^\frac{\alpha u}{\alpha+u}$ for $\alpha>>1$. We urge the readers to go through the cited reference to know about the exact definition of $\lambda_*, \lambda^*$ and sub-supersolutions  $\psi_i$ and $ \phi_i.$ 
	\par
	
% 	We intend to prove the existence of a third solution to (\ref{kopr}) when $\lambda\in (\la_*,\la^*).$ First we re-write the equation (\ref{kopr}) as (\ref{p2intro}) by considering $G(u)= \frac{F(u)-F(0)}{u^\delta}.$
We intend to modify the construction of the subsolution $\psi_2$ given in \cite{ko2011multiplicity} and by abuse of notation we call the new subsolution also as $\psi_2.$ 
  This reconstruction of $\psi_2$ is necessary to use Proposition \ref{scp3} and we conclude the example by showing \eqref{kopr} has a third solution $u_3$ when $\lambda\in (\la_*,\la^*).$ For ease of notation let us assume that $F(0)=1.$ Now we re-write the equation (\ref{kopr}) as (\ref{p2}) by taking $G(u):= \frac{F(u)-F(0)}{u^\delta}.$ Clearly, Theorem 1.2 would guarantee the existence of a third solution $u_3 \in [\psi_1,\phi_1] \setminus([\psi_1,\phi_2] \cup [\psi_2,\phi_1])$  of \eqref{kopr} if $A_G(\phi_2)<\phi_2$ and $A_G(\psi_2)>\psi_2.$  For our purpose of establishing $A_G(\psi_2)>\psi_2,$ as mentioned before we slightly modify the construction of $\psi_2$ given in \cite{ko2011multiplicity}. We first fix a $\la\in (\la_*,\la^*)$ and define $H(u):=\frac{h(u)}{1+\epsilon_\lambda}$ where $h(u)$ is given in Page no-7 of \cite{ko2011multiplicity}. Here $\epsilon_\lambda$ is chosen in such a way that $\frac{\lambda}{1+\epsilon_\lambda} $ still lies within the interval $(\lambda_{*}, \lambda^{*})$. We now follow the construction of sub-solution $\psi_2$ in \cite{ko2011multiplicity} except for equation number (5) in page 8. If we modify this particular equation (5) in \cite{ko2011multiplicity}, with $-\Delta_p u= \lambda H(u)\; \text{in} \;\Omega;\; u|_{\partial\Omega}=0$ and redo the calculations then the resulting sub-solution verifies the strict inequality $A_G(\psi_2)>\psi_2.$ From the definition of $\phi_2$ in \cite{ko2011multiplicity}, clearly $A_G(\phi_2)<\phi_2.$
	
	%by using $H(u):= \frac{h(u)}{1+\epsilon_\la}$ instead of $h(u)$ where $h$ is as given in page number 7 of \cite{ko2011multiplicity}. We choose $\epsilon_\la$ in such a way that $\frac{\la}{1+\epsilon_\la} \in (\la_*, \la^*)$. Rest of the proof remains same and then the resulting subsolution $\psi_2$ also satisfies the strict inequality $A_g(\psi_2)>\psi_2$. From the definition of $\phi_2$ in \cite{ko2011multiplicity}, clearly $A_G(\phi_2)<\phi_2.$ This establishes the existence of a third solution $u_3 \in [\psi_1,\phi_1] \setminus([\psi_1,\phi_2] \cup [\psi_2,\phi_1])$  of \eqref{kopr}. 

 \par We summarize the above discussion in the following remark.
	\begin{remark}
 		\label{remarkshivaji}
 		The boundary value problem (\ref{kopr}) admits three solutions whenever $\la\in (\la_*,\la^*).$ \qedsymbol
 \end{remark}
 Towards the completion of our work we came across two recent manuscripts  \cite{acharya2021existence} and \cite{arora2022multiplicity} where a problem similar to Example 3.1 is considered for $p$-$q$ Laplacian. In both papers, the authors focus on the construction of two pairs of sub-super solutions either in a ball or in a general domain. Arora \cite{arora2022multiplicity} also establishes a three solution theorem for $p$-$q$ Laplacian with the help of the strong comparison principle given in Proposition 6 of \cite{papageorgiou2020nonlinear}. Though Example 3.1 can be treated as a special case of the work of Arora, we wish to conclude our paper emphasizing that in the light of the strong comparison principle in a ball (Theorem 1.1), our three solution theorem is applicable for more general elliptic boundary value problems. \\
\medskip\noindent

	 \end{document}